\documentclass[a4paper,12pt]{article}
\usepackage[utf8]{inputenc}
   \usepackage{hyperref}
   \usepackage{amsmath}
   \usepackage{amsfonts}
   \usepackage{amssymb}
   \usepackage{mathrsfs}
   \usepackage{graphicx}
   \usepackage[abs]{overpic}
   \usepackage{float}
   \usepackage{cases}
\textwidth 15.25cm \textheight 22cm \topmargin 0.0cm \oddsidemargin
0.5cm \evensidemargin 0.0cm
\parskip -0.0cm
\newtheorem{theorem}{Theorem}
\newtheorem{lemma}{Lemma}
\newtheorem{corollary}{Corollary}

\newtheorem{proposition}{Proposition}

\newtheorem{remark}{Remark}

\newenvironment{proof}{\smallskip\noindent{\bf Proof}\rm}
{\hfill $\Box$\medskip}
\setcounter{section}{0}




\newcommand{\be}{\begin{equation}}
\newcommand{\ee}{\end{equation}}
\newcommand{\ba}{\begin{array}}
\newcommand{\ea}{\end{array}}
\newcommand{\bea}{\begin{eqnarray*}}
\newcommand{\eea}{\end{eqnarray*}}
\newcommand{\bean}{\begin{eqnarray}}
\newcommand{\eean}{\end{eqnarray}}


\makeatletter \@addtoreset{equation}{section}

\makeatother

\begin{document}

\title{ Eigenvalue Ratios for vibrating string equations with single-well densities }

\author{Jihed Hedhly \thanks{Facult\'e des Sciences de Tunis, Universit\'{e} El-Manar,
 Laboratoire Equations aux D\'{e}riv\'{e}es Partielles,
, jihed.hedhly@fst.utm.tn}}

\date{}

 \maketitle

\begin{abstract} In this paper, we prove the optimal upper bound
$\frac{\lambda_n}{\lambda_m}\leq(\frac{n}{m})^2$ of vibrating string
$$-y''=\lambda\rho(x) y,$$ with Dirichlet boundary conditions for single-well densities.
The proof is based on the inequality
$\frac{\lambda_n(\rho)}{\lambda_{m}(\rho)}\leq
\frac{\lambda_n(L)}{\lambda_{m}(L)} ,$ with $L$ must be a
stepfunction. We also prove the same result for the Dirichlet
Sturm-Liouville problems.

\end{abstract}
~~~~~~~~~~\\
~~~~~~~~~~\\
{\it{2000 Mathematics Subject Classification}}. Primary 34L15, 34B24. \\
{\it{Key words and phrases}}. Sturm-Liouville Problems, eigenvalue
ratio, single-barrier, single-well, Pr\"{u}fer substitution.
\maketitle
\section{ Introduction}
We consider the Sturm-Liouville equation acting on $[0,1]$
\begin{equation} \label{1.1S}
-(p(x)y^{\prime})^ {\prime }+q(x)y=\lambda \rho (x)y,
\end{equation}
 with Dirichlet boundary
conditions
\begin{equation}
y(0)=y(1)=0  \label{1.2},
\end{equation}
where $p>0$, $\rho>0$ and $q$ (may change sign) are continuous
coefficients on $[0,1]$. Here we limit ourselves to the case
$\rho>0$. The case $\rho<0$ has been considered for related problems
providing different results, we refer to pioneering works
\cite{AC1,AC2} and some refer therein.

As is well-known (see \cite{5}), there exist two countable sequences
of eigenvalues
$$\lambda
_{1}<\lambda _{2}<\dots<\lambda _{n}\dots\infty.$$
  The issues of optimal estimates for the eigenvalue ratios
$\frac{\lambda_n}{\lambda_m}$ have attracted a lot of attention
(cf.\cite{1,2',JJ,H99,H2016,4,MH,5',M'}) and references therein.
Ashbaugh and Benguria proved in  \cite{2'} that if $q\geq0$ and
$0<k\leq p\rho(x)\leq K$, then the eigenvalues of
\eqref{1.1S}-\eqref{1.2} satisfy
%
\begin{eqnarray*}
\frac{\lambda _{n}}{\lambda _{1}}\leq \frac{Kn^{2}}{k}.
\end{eqnarray*} 
They also established the following ratio estimate (of two arbitrary
eigenvalues) $$\frac{\lambda _{n}}{\lambda _{m}}\leq
\frac{Kn^2}{km^2},\quad n> m\geq1,$$ with $q\equiv0$ and $0<k\leq
p\rho(x)\leq K$. Later, Huang and Law \cite{4} extended the results
in \cite{2'} to more general boundary conditions. \\ In the case
where $ p\equiv 1 $ and $ q \equiv 0 $, Huang proved in \cite{H99}
 that the eigenvalues for the string equation
\begin{equation} \label{1.1}
-y''=\lambda\rho(x) y,
\end{equation} with Dirichlet boundary
conditions \eqref{1.2} satisfy $\frac{\lambda_2}{\lambda_1}\leq4$
for symmetric single-well density $\rho$ and
$\frac{\lambda_2}{\lambda_1}\geq4$ for symmetric single-barrier
density  $\rho$. The later one has been extended by Horv\'{a}th
\cite{MH} for single-barrier (not necessarily symmetric) density
$\rho$. In 2006, Kiss \cite{M'} showed that
$\frac{\lambda_n}{\lambda_1}\leq n^2$ for symmetric single-well
densities and $\frac{\lambda_n}{\lambda_1}\geq n^2$ for symmetric
single-barrier densities.
  \\
 Recall that $f$ is a single-barrier (resp.
single-well) function on $[0, 1]$ if there is a point $x_{0} \in[0,
1]$ such that $f$ is increasing (resp. decreasing) on $[0, x_{0}]$
and decreasing (resp. increasing) on $[x_{0}, 1]$ (see  \cite{2}).

In this paper, we prove the optimal upper bound
$\frac{\lambda_n}{\lambda_m}\leq(\frac{n}{m})^2$
of\eqref{1.1}-\eqref{1.2} for single-well density $\rho$ (not
necessarily symmetric). The main step to prove this result is the
inequality $\frac{\lambda_n(\rho)}{\lambda_{m}(\rho)}\leq
\frac{\lambda_n(L)}{\lambda_{m}(L)} ,$ with $L$ is being a
stepfunction. We also prove an result for the Dirichlet
Sturm-Liouville problems \eqref{1.1S}-\eqref{1.2}. More precisely,
we show that $\frac{\lambda_n}{\lambda_m}\leq(\frac{n}{m})^2$ for
$q$ is single-barrier and $p\rho$ is single-well with transition
point $x_0=\frac{1}{2}$ such that
  $0<\min(\hat{\mu}_1,\tilde{\mu}_1),$
 where $\hat{\mu} _1$ and
$\tilde{\mu}_1$ are the first eigenvalues of the Neumann boundary
problems defined on $[0,\frac{1}{2}]$ and $[\frac{1}{2},1],$
respectively. \\ For this result, we modify the inverse Liouville
substitution (e.g., see  \cite[pp. 51]{M}, \cite{2'}) in order to
transform Equation \eqref{1.1S} into \eqref{1.1}, whose the density
is single-well. Therefore, we can use the result of section $2$ on
the ratio of eigenvalues $\frac{\lambda_n}{\lambda_{m}}$ for the
string equations with single-well densities.
\section{Eigenvalue ratio for the vibrating string equations }
Denote by $u_n(x)$ be the $n-th$ eigenfunction of \eqref{1.1}
corresponding to $\lambda_n$, normalized so that
$$\int_0^1\rho(x) u_n^2(x)dx=1.$$

It is well known that the $u_n(x)$ has exactly $(n-1)$ zeros  in the
open interval $(0,1)$. The zeros of the $n-th$ and $(n +1)st$
eigenfunctions interlace, i.e. between any two successive zeros of
the $n-th$ eigenfunction lies a zero of the $(n+1)st$ eigenfunction.
\\ We denote by $(y_i)_i$ the zeros of $u_n$ and $(z_i)_i$ the zeros of
$u_{n-1}$, then in view of the comparison theorem (see
\cite[Chap.1]{5}), we have $y_i<z_i$. We may assume that $u_n(x)>0$
and $u_{n-1}(x)>0$ on $(0,y_1)$, then we have
$\frac{u_n(x)}{u_{n-1}(x)}$ is strictly decreasing on $(0, 1).$ In
deed,
$$(\frac{u_n(x)}{u_{n-1}(x)})'=\frac{u'_n(x)u_{n-1}(x)-u'_{n-1}(x)u_{n}(x)}{u^2_{n-1}(x)}
=\frac{w(x)}{u^2_{n-1}(x)}.$$ We find
$$w'(x)=u''_n(x)u_{n-1}(x)-u''_{n-1}(x)u_{n}(x)=(\lambda_{n-1}-\lambda_n)\rho(x)u_n(x)u_{n-1}(x),$$
this implies that $w(x) < 0$ on $(0, 1)$. Hence
$\frac{u_n(x)}{u_{n-1}(x)}$ is strictly decreasing on $(0, 1).$ From
this, there are points $x_i\in(y_i,z_i)$ such that
\begin{eqnarray*}  \left\{
\begin{array}{ll} u^2_n(x)>u^2_{n-1}(x),\ \ \ x\in(x_{2i},x_{2i+1}),\\~~\\
 u^2_n(x)<u^2_{n-1}(x),\ \ \ x\in(x_{2i+1},x_{2i+2}).
 \end{array}
 \right.
\end{eqnarray*}

Let $\rho(., \tau)$ is a one-parameter family of piecewise
continuous densities such that $\frac{\partial\rho(.,
\tau)}{\partial\tau}$ exists, and let $u_n(x,\tau)$ be the $n-th$
eigenfunction of \eqref{1.1} corresponding to $\lambda_n(\tau)$ of
the corresponding String equation \eqref{1.1} with $\rho=\rho(.,
\tau)$.
 From Keller in \cite{KL}, we get
$$\frac{d}{d\tau}\lambda_n(\tau)=-\lambda_n(\tau)
\int_0^1\frac{\partial\rho}{\partial
\tau}(x,\tau)u_n^2(x,\tau)d\tau.$$ By straightforward computation
that, yields
\begin{equation}\label{Ra}\frac{d}{d\tau}\Big[\frac{\lambda_n(\tau)}{\lambda_{m}(\tau)}\Big]=
\frac{\lambda_n(\tau)}{\lambda_{m}(\tau)}
\int_0^1\frac{\partial\rho}{\partial
\tau}(x,\tau)(u_{m}^2(x,\tau)-u_n^2(x,\tau))d\tau.\end{equation}

We first prove.
\begin{proposition}\label{p1}
Let $\rho>0$ be monotone decreasing in $ [0,1]$ and let
$L(x)=\rho(x_{2i+1})$ (where $ x_i $ the points such that $ u_n ^ 2
(x_i) = u_ {n-1}^2 (x_i) )$, then
\begin{equation}\label{com}
   \frac{\lambda_{n}(\rho)}{\lambda_{m}(\rho)}\leq
   \frac{\lambda_{n}(L)}{\lambda_{m}(L)}.
   \end{equation}
   with equality if and only if $\rho\equiv L$.
\end{proposition}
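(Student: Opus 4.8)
The plan is to realize $L$ as the endpoint of a monotone deformation of $\rho$ and to show that $\lambda_n/\lambda_m$ never decreases along it; throughout I take $m=n-1$, so that the sign analysis carried out before the statement for $u_n$ and $u_{n-1}$ applies. First I would set
$$\rho(x,\tau)=(1-\tau)\rho(x)+\tau L(x),\qquad \tau\in[0,1],$$
so $\rho(\cdot,0)=\rho$ and $\rho(\cdot,1)=L$. This particular family is forced on us by two features. Since $\rho$ decreases and $x_1<x_3<\cdots$, the pivot values $\rho(x_{2i+1})$ decrease with $i$, so $L$ is decreasing and every $\rho(\cdot,\tau)$ is a convex combination of decreasing positive functions, hence itself positive and decreasing; thus \eqref{Ra} applies at every $\tau$. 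Moreover $\partial_\tau\rho=L-\rho$ is $\tau$-independent, and because $L$ equals $\rho(x_{2i+1})$ on the cell around $x_{2i+1}$ its sign is pinned: $L-\rho\le0$ on $(x_{2i},x_{2i+1})$ and $L-\rho\ge0$ on $(x_{2i+1},x_{2i+2})$.

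Next I would feed this into \eqref{Ra}, obtaining
$$\frac{d}{d\tau}\Big[\frac{\lambda_n(\tau)}{\lambda_m(\tau)}\Big]=\frac{\lambda_n(\tau)}{\lambda_m(\tau)}\int_0^1\big(L(x)-\rho(x)\big)\big(u_m^2(x,\tau)-u_n^2(x,\tau)\big)\,dx .$$
Because $\lambda_n/\lambda_m>0$, the sign of the derivative is the sign of the integral, and at $\tau=0$ this is immediate: the recorded sign pattern gives $u_m^2-u_n^2\le0$ on $(x_{2i},x_{2i+1})$ and $\ge0$ on $(x_{2i+1},x_{2i+2})$, exactly matching that of $L-\rho$ cell by cell, so the integrand is pointwise $\ge0$. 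I would also record the normalization identity $\int_0^1\rho(x,\tau)\big(u_m^2-u_n^2\big)\,dx=0$ (both eigenfunctions are $\rho(\cdot,\tau)$-normalized); together with $L-\rho(\cdot,\tau)=(1-\tau)(L-\rho)$ it rewrites the derivative's sign as that of $\int_0^1 L\,(u_m^2-u_n^2)\,dx$, a sometimes more convenient form.

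The hard part will be promoting the sign of the integral from $\tau=0$ to every $\tau\in(0,1)$, and this is where I expect the real work to lie. The obstacle is a mismatch of breakpoints: the sign changes of $L-\rho$ stay frozen at the initial crossings $x_{2i+1}$, whereas the crossings $x_i(\tau)$ of $u_n(\cdot,\tau)$ and $u_m(\cdot,\tau)$ drift with $\tau$, so the clean cell-by-cell matching need not persist for $\tau>0$. I would first note that the Wronskian computation preceding the statement applies verbatim to each $\rho(\cdot,\tau)$ (which remains positive and decreasing), so $u_n(\cdot,\tau)/u_m(\cdot,\tau)$ is again strictly decreasing and $u_m^2-u_n^2$ changes sign exactly once per nodal interval. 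It then remains to control the drift of these crossings against the frozen pivots; the most robust route is to run the deformation one cell $(x_{2i},x_{2i+2})$ at a time, so that $\partial_\tau\rho$ is supported on a single cell and the matching reduces to showing the unique crossing there stays on the correct side of the pivot $x_{2i+1}$. Granting the pointwise inequality $(L-\rho)(u_m^2-u_n^2)\ge0$ for all $\tau$, integrating $\frac{d}{d\tau}[\lambda_n/\lambda_m]\ge0$ over $[0,1]$ yields \eqref{com}; and since $u_m^2-u_n^2$ vanishes only on a finite set, the derivative vanishes identically only when $L-\rho\equiv0$, giving equality precisely when $\rho\equiv L$.
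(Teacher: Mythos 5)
Your proposal is, in substance, the paper's own proof rather than a different route: the paper defines $\hat{\rho}(x,\tau)=\tau\rho(x)+(1-\tau)L(x)$ (your family with the orientation of $\tau$ reversed), applies the derivative formula \eqref{Ra}, splits the integral over the cells $(x_{2i},x_{2i+2})$, and uses exactly your cell-by-cell sign matching to conclude that the ratio $\lambda_n(\tau)/\lambda_{n-1}(\tau)$ is monotone in $\tau$. Your side remarks are also sound: $L$ is indeed a decreasing step function, the Wronskian argument persists for every member of the family (it uses only positivity of the density, not monotonicity), and the normalization identity $\int_0^1\hat{\rho}(x,\tau)(u_m^2-u_n^2)\,dx=0$ is correct.

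The step you explicitly grant --- the pointwise inequality $(L-\rho)(u_m^2-u_n^2)\ge0$ for all intermediate $\tau$ --- is a genuine gap, but it is the paper's gap as well, not an idea you failed to find. In \eqref{A.C} the paper asserts that each cell integral $\int_{x_{2i}}^{x_{2i+2}}(\rho-L)(u_{n-1}^2(x,\tau)-u_n^2(x,\tau))\,dx$ is nonpositive for \emph{every} $\tau$, but the sign pattern it invokes was established only for the eigenfunctions of $\rho$ itself, i.e., at one endpoint of the deformation; for intermediate $\tau$ the eigenfunctions are those of $\hat{\rho}(\cdot,\tau)$, whose crossings drift away from the frozen breakpoints $x_{2i+1}$ of $L-\rho$ --- precisely the mismatch you describe. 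Since the endpoint only gives the sign of the derivative at one value of $\tau$, this control at intermediate $\tau$ is genuinely needed for the conclusion, so you have correctly isolated the unjustified step of the published argument. Be aware, though, that your proposed remedy (deforming one cell at a time) does not by itself close it: localizing the support of $\partial_\tau\rho$ still leaves the unique crossing in that cell free to drift past the pivot, which is the same control problem. One further shared limitation: like the paper's displayed computation, you treat only $m=n-1$; the paper's closing ``and hence'' for general $m<n$ (with the single step function $L$ tied to the pair $(n,n-1)$) is likewise unproved, although for Theorem \ref{theo11} the consecutive case suffices once it is chained with the Pr\"{u}fer bound of Corollary \ref{c2}.
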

\begin{proof} Define $\hat{\rho}(x,\tau)=\tau\rho(x)+(1-\tau)L(x).$
Using \eqref{Ra}, one gets
\begin{eqnarray} \label{A.C}
&&\frac{d}{d\tau}\Big[\frac{\lambda_n(\tau)}{\lambda_{n-1}(\tau)}\Big]=
\frac{\lambda_n(\tau)}{\lambda_{n-1}(\tau)}
\int_0^1\frac{\partial\hat{\rho}}{\partial
\tau}(x,\tau)(u_{n-1}^2(x,\tau)-u_n^2(x,\tau))d\tau \cr
&&=\frac{\lambda_n(\tau)}{\lambda_{n-1}(\tau)}\sum_{i=0}^n
\int_{x_{2i}}^{x_{2i+2}}(\rho(x)-L(x))(u_{n-1}^2(x,\tau)-u_n^2(x,\tau))d\tau.
\end{eqnarray}
We notice that
$$\int_{x_{2i}}^{x_{2i+2}}(\rho(x)-L(x)(u_{n-1}^2(x,\tau)-u_n^2(x,\tau))d\tau\leq0.$$
It then follows that
$\frac{d}{d\tau}\Big[\frac{\lambda_n(\tau)}{\lambda_{n-1}(\tau)}\Big]\leq0$.
Thus, by the continuity of eigenvalues, we obtain
$$\frac{\lambda_n(\rho)}{\lambda_{n-1}(\rho)}=\frac{\lambda_n(1)}{\lambda_{n-1}(1)}\leq
\frac{\lambda_n(0)}{\lambda_{n-1}(0)}=\frac{\lambda_n(L)}{\lambda_{n-1}(L)}
.$$ And hence
$$\frac{\lambda_n(\rho)}{\lambda_{m}(\rho)}\leq
\frac{\lambda_n(L)}{\lambda_{m}(L)} .$$ Equality holds iff $\rho=L$.
\end{proof}\\
 We are now in position to state our main result.
\begin{theorem}\label{theo11}
Let $\rho$ be a single-well density on $[0, 1]$. Then the
eigenvalues of the Dirichlet problem \eqref{1.1}-\eqref{1.2} satisfy
   \begin{equation}\label{RA}
   \frac{\lambda_{n}}{\lambda_{m}}\leq(\frac{n}{m})^2,
   \end{equation}
with equality if and only if $\rho$ is constant.
 \end{theorem}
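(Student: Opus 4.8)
The plan is to reduce the case of a general single-well density to the monotone case already handled in Proposition~\ref{p1}, and then to reduce the monotone case to a comparison against an explicit step-function for which the ratio can be computed directly. Let me sketch both reductions.

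\medskip

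\noindent\emph{Step 1: Reduction to the monotone case.}
A single-well density $\rho$ has a transition point $x_0$ so that $\rho$ is decreasing on $[0,x_0]$ and increasing on $[x_0,1]$. The idea is to split the interval at $x_0$ and treat the two monotone pieces separately. On $[0,x_0]$ the density is monotone decreasing, so Proposition~\ref{p1} applies directly; on $[x_0,1]$ the density is monotone increasing, which is the mirror image, so the analogous statement holds by the reflection $x\mapsto 1-x$. One should therefore prove a version of Proposition~\ref{p1} localized to each piece, using the same Keller-type variational formula \eqref{Ra} but with the homotopy $\hat\rho(x,\tau)=\tau\rho(x)+(1-\tau)L(x)$ acting only on the relevant monotone piece, where $L$ is the step-function built from the values $\rho(x_{2i+1})$ at the crossing points $x_i$ of $u_n^2$ and $u_m^2$. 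The sign of the integrand on each subinterval $(x_{2i},x_{2i+2})$ is governed by the interlacing and monotonicity of $u_n/u_m$ exactly as in Proposition~\ref{p1}, so the monotonicity of the ratio $\lambda_n/\lambda_m$ along the homotopy should persist. This gives
\begin{equation}\label{planstep}
\frac{\lambda_n(\rho)}{\lambda_m(\rho)}\le\frac{\lambda_n(L)}{\lambda_m(L)},
\end{equation}
reducing the problem to a step-function density $L$.

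\medskip

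\noindent\emph{Step 2: Evaluating the ratio for the step-function.}
For a step-function density $L$, the equation $-y''=\lambda L y$ becomes piecewise constant-coefficient, so on each step the eigenfunctions are trigonometric. The natural tool here is the Pr\"ufer substitution advertised in the keywords: writing $y=r\sin\theta$, $y'=r\cos\theta$ (suitably scaled by $\sqrt{L}$), the Pr\"ufer angle $\theta$ accumulates a total phase of $n\pi$ for the $n$-th eigenfunction. The plan is to show that for the optimal step-function the phase accumulation yields precisely $\lambda_n(L)/\lambda_m(L)\le (n/m)^2$, with the bound saturated when $L$ is constant (the free string, whose eigenvalues are $\lambda_n=n^2\pi^2$, giving equality $(n/m)^2$). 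Chaining this with \eqref{planstep} yields \eqref{RA}, and tracing the equality case through both steps forces $\rho\equiv\text{const}$.

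\medskip

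\noindent\emph{Main obstacle.} The delicate point is controlling the crossing points $x_i$ and the sign of the integrand on each $(x_{2i},x_{2i+2})$ for a general \emph{single-well} density, not merely a monotone one: near the transition point $x_0$ the monotonicity of $\rho$ reverses, so one must verify that the variational sign argument still closes on the piece straddling $x_0$, or else arrange the decomposition so that $x_0$ falls at a crossing point. A second, more technical obstacle is making the step-function comparison in Step~2 genuinely \emph{optimal}: one must argue that among all admissible step-functions the constant density maximizes $\lambda_n/\lambda_m$, which requires the explicit Pr\"ufer-phase computation to be monotone in the jump heights. I expect the first of these --- handling the transition point within the sign analysis --- to be the crux of the argument.
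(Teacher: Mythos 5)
Your proposal follows the same architecture as the paper --- a Keller-homotopy comparison against a step function $L$ built on the crossing intervals of $u_n^2$ and $u_m^2$ (Proposition~\ref{p1}), then a Pr\"ufer-phase bound $\lambda_n(L)/\lambda_m(L)\le (n/m)^2$ (Corollary~\ref{c2}) --- but it is not a proof, because it leaves open exactly the step that separates Theorem~\ref{theo11} from the already-known monotone case, and the device you do propose for that step would fail. The formula \eqref{Ra} is global: the eigenfunctions $u_m(\cdot,\tau),u_n(\cdot,\tau)$, and hence the crossing points $x_i$, belong to the full problem on $[0,1]$ and move with $\tau$, so there is no meaningful way to run the homotopy ``only on the relevant monotone piece'' and invoke a localized Proposition~\ref{p1} on each half; the two halves are coupled through the same eigenfunctions, and an eigenvalue problem does not decompose over subintervals. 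Worse, the pointwise sign argument behind Proposition~\ref{p1} is directional: for decreasing $\rho$ one gets $(\rho-L)(u_m^2-u_n^2)\le 0$ on each $(x_{2i},x_{2i+2})$, but for an increasing piece the identical computation gives the \emph{opposite} sign (this is precisely why, as in Remark~\ref{rem1}, the inequality reverses for single-barrier densities). So inside one global homotopy the decreasing intervals push the ratio down while the increasing intervals push it up, and on the interval straddling $x_0$ neither sign is available; ``mirroring'' cannot be applied to half of a coupled problem. Your fallback --- ``arrange the decomposition so that $x_0$ falls at a crossing point'' --- is not available either: the $x_i$ are determined by the equation $u_n^2=u_m^2$ (and vary along the homotopy); they are not parameters you may place. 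Since you explicitly defer this point as ``the crux,'' the central difficulty of the theorem is left without any argument.

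Two further remarks. First, the second obstacle you raise in Step~2 is not a real one: no optimization over step functions, and no monotonicity ``in the jump heights,'' is needed. By Corollary~\ref{c1}, for a piecewise constant density the terms containing $\rho'/\rho$ vanish identically, so $\dot\theta(x,z)\ge 0$ for \emph{every} step function $L$; monotonicity of $\theta(1,\cdot)$ in $z$ then gives $m\pi/z_m=\theta(1,z_m)\le\theta(1,z_n)=n\pi/z_n$, i.e.\ $\lambda_n(L)/\lambda_m(L)\le(n/m)^2$ for every $L$, with equality analysis coming from \eqref{2.7}. Second, for contrast, the paper keeps the homotopy global and disposes of the increasing part via the reflection $\tilde\rho(x)=\rho(1-x)$ before chaining Proposition~\ref{p1} with Corollary~\ref{c2}; its own treatment of the interval containing the transition point is admittedly terse, which confirms that the difficulty you flagged is genuine --- but flagging it is not the same as resolving it, and as submitted your plan only reproduces the monotone case.
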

 The proof of Theorem \ref{theo11} will be given in section $ 3$.

\section{Proof of Theorems \ref{theo11}}

\begin{corollary}\label{c2}
Consider equation \eqref{1.1} with the Dirichlet boundary conditions
\eqref{1.2}. If the density $\rho$ is decreasing in $[0,1]$, then
the $m-th$ and $n-th$ eigenvalues with $m < n$ satisfy
\begin{equation*}
   \frac{\lambda_{n}}{\lambda_{m}}\leq(\frac{n}{m})^2.
   \end{equation*} Equality holds iff $\rho$ is constant.
\end{corollary}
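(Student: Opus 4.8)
The plan is to treat the corollary as the base case of Theorem \ref{theo11} (the degenerate single-well case, with the transition point pushed to an endpoint) and to run it entirely through Proposition \ref{p1}. First I would invoke Proposition \ref{p1}: since $\rho$ is decreasing there is an associated decreasing stepfunction $L$ with $\frac{\lambda_n(\rho)}{\lambda_m(\rho)}\le\frac{\lambda_n(L)}{\lambda_m(L)}$, equality holding only when $\rho\equiv L$. This reduces the assertion to the purely stepfunction statement $\frac{\lambda_n(L)}{\lambda_m(L)}\le(\frac{n}{m})^2$, and I would postpone the equality analysis to the very end.

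Next I would recast the target analytically with the Pr\"ufer substitution $y=r\sin\theta,\ y'=\sqrt{\lambda}\,r\cos\theta$, under which $\theta'=\sqrt{\lambda}\,(\cos^2\theta+L\sin^2\theta)$ with $\theta(0)=0$ and $\theta(1;\lambda_k)=k\pi$ at the $k$-th eigenvalue. Integrating the phase equation over $[0,1]$ gives the clean identity $\frac{k\pi}{\sqrt{\lambda_k}}=\int_0^1(\cos^2\theta_k+L\sin^2\theta_k)\,dx=:J(k)$. A short computation then shows that the whole family of inequalities $\frac{\lambda_n}{\lambda_m}\le(\frac{n}{m})^2$ for every $m<n$ is \emph{equivalent} to the single monotonicity statement that $\frac{\sqrt{\lambda_k}}{k}$ is non-increasing in $k$, i.e.\ that $J(k)$ is non-decreasing in $k$. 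Since $J(k+1)-J(k)=\int_0^1(L-1)(\sin^2\theta_{k+1}-\sin^2\theta_k)\,dx$, the corollary collapses to proving the one inequality $\int_0^1(L-1)(\sin^2\theta_{k+1}-\sin^2\theta_k)\,dx\ge0$.

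This last inequality is the heart of the matter, and it is where I expect the real difficulty to lie. The mechanism mirrors Proposition \ref{p1}: the weight $\sin^2\theta_k$ plays the role of the normalised $u_k^2$, so the sign of the integral has to be extracted from the interaction between the monotone (decreasing) profile of $L$ and the oscillatory difference $\sin^2\theta_{k+1}-\sin^2\theta_k$. The tools I would use are exactly those already assembled in Section $2$: the interlacing of the zeros of $u_k$ and $u_{k+1}$ (the inequalities $y_i<z_i$), together with the fact that for a decreasing density the successive nodal intervals are non-decreasing in length; these localise the sign changes of $\sin^2\theta_{k+1}-\sin^2\theta_k$ relative to the single descent of $L$, and feeding this ordering into a Chebyshev/rearrangement-type estimate should yield the required sign. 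The obstacle is precisely the nonlinear coupling of $\theta$ to both $x$ and $\lambda$ in the phase equation: the variables cannot be separated, so the comparison must be carried out on each nodal block and then summed, and it is the \textbf{monotone arrangement} of the density, not merely its range, that is indispensable. Finally I would recover the equality case by tracing equality back through Proposition \ref{p1} (which forces $\rho\equiv L$) and through the rearrangement step (which forces the oscillatory defect to vanish, hence $L$ constant), concluding that equality holds if and only if $\rho$ is constant.
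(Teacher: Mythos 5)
Your first step (the reduction to a stepfunction via Proposition \ref{p1}) coincides with the paper, and your algebraic reformulation is correct: with the standard Pr\"ufer phase one indeed gets $k\pi/\sqrt{\lambda_k}=J(k)$, so the corollary for the stepfunction $L$ is equivalent to $J(k)$ being non-decreasing, i.e.\ to $\int_0^1(L-1)\,(\sin^2\theta_{k+1}-\sin^2\theta_k)\,dx\ge 0$. But that is exactly where your argument stops being a proof: you yourself call this inequality ``the heart of the matter'' and then only gesture at interlacing of zeros, at an auxiliary claim that nodal intervals of a decreasing density are non-decreasing in length, and at an unspecified ``Chebyshev/rearrangement-type estimate''. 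None of this is carried out, and it is not routine: $\sin^2\theta_{k+1}-\sin^2\theta_k$ compares two different eigenfunctions with different Pr\"ufer radii (recall $\sin^2\theta_k=y_k^2/r_k^2$, not the normalized $u_k^2$ of Proposition \ref{p1}), its sign pattern relative to the descent of $L$ is not controlled by interlacing alone, and the nodal-length claim is itself unproved. So the proposal has a genuine gap precisely at the step that carries all the difficulty.

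The paper avoids this two-eigenfunction comparison altogether. It uses the modified Pr\"ufer substitution \eqref{2.2} of Kiss, in which the phase is rescaled by the spectral parameter, $\theta(x,z)=\varphi(x,z)/z$, and it differentiates in $z$ instead of comparing consecutive eigenvalues: Corollary \ref{c1} gives the closed formula \eqref{2.9t} for $\dot{\theta}$, in which every term that could be negative carries a factor $\rho'/\rho$. For the stepfunction $L$ one has $L'\equiv 0$ on each interval of constancy, so the formula collapses to $\dot{\theta}(x,z)=\frac{2}{z\,r^{2}(x)}\int_{0}^{x}r^{2}(t)\,L^{\frac{1}{2}}(t)\,dt\ge 0$; hence $z\mapsto\theta(1,z)$ is non-decreasing, and evaluating at the eigenvalues $z_m<z_n$ gives $m\pi/z_m\le n\pi/z_n$, i.e.\ $\lambda_n(L)/\lambda_m(L)\le (n/m)^2$ in one stroke, one eigenvalue at a time. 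If you want to rescue your route you must actually prove your oscillatory-integral inequality, which is essentially as hard as the original statement; the efficient repair is to replace that step by the $z$-monotonicity argument of Corollary \ref{c1}.
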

In order to prove Corollary \ref{c2} we need some preliminary
results.\\ Let $y( x,z ) $ be the unique solution of the initial
value problem
\begin{eqnarray} \label{H.S} \left\{
\begin{array}{ll}-y^{\prime \prime }=z^{2}\rho(x)y,\ \ \ x\in [ 0,1] ,\ \ \ z>0,\\
y(0)=0, \ \ \  y^{\prime }(0)=\rho^{\frac{1}{4}}(0).
 \end{array}
 \right.
\end{eqnarray}
We shall apply to System \eqref{H.S}, the modified Pr\"{u}fer
substitution as introduced in \cite{M'}.
\begin{eqnarray}\label{2.2}
&&y(x,z) =\frac{r( x,z)}{z}\rho^{\frac{-1}{4}} \sin \varphi ( x,z),
\cr &&y^{\prime }( x,z)=r(x,z)\rho^{\frac{1}{4}} \cos \varphi (
x,z), \cr &&\varphi (0,z) =0,
\end{eqnarray}
where $r(x,z)>0$, and then let $\theta(x,z) =\frac{\varphi
(x,z)}{z}.$ We denote by prime $({resp.~ dot})$ the derivative with
respect to $x$ $({resp. ~z})$.\\ Using Equation $\eqref{1.1} $
together with \eqref{2.2}, one finds the following one finds the
following differential equations for $r( x,z)$ and $\varphi(x,z)$:
\begin{equation} \label{2.6}
\varphi ^{\prime
}=z\rho^{\frac{1}{2}}+\frac{1}{4}\frac{\rho'}{\rho}\sin (2\varphi ),
\end{equation}%
\begin{equation}\label{2.7}
\frac{r^{\prime }}{r}=\frac{-1}{4}\frac{\rho'}{\rho} \cos(2 \varphi)
.
\end{equation}%
\begin{lemma}\label{lem1}
\begin{equation}\label{2.9}
\dot{\varphi}=\int_0^x\rho^{\frac{1}{2}}(t)\frac{r^2(t,z)}{r^2(x,z)}dt.
\end{equation}
\end{lemma}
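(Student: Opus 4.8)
The plan is to convert the claimed identity into a first-order linear ODE in $x$ for $\dot\varphi$ and solve it by an integrating factor. Since the solution of \eqref{2.6}--\eqref{2.7} depends smoothly on the parameter $z$, the partial derivatives $\partial_x$ and $\partial_z$ commute on $\varphi$, so I would differentiate \eqref{2.6} with respect to $z$ to obtain
$$(\dot\varphi)' = \rho^{\frac12} + \frac12\frac{\rho'}{\rho}\cos(2\varphi)\,\dot\varphi .$$
The essential simplification comes from \eqref{2.7}: because $\frac{r'}{r} = -\frac14\frac{\rho'}{\rho}\cos(2\varphi)$, the coefficient of $\dot\varphi$ is precisely $-2\frac{r'}{r}$, and the equation collapses to
$$(\dot\varphi)' + 2\frac{r'}{r}\,\dot\varphi = \rho^{\frac12} .$$

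Next I would solve this linear equation. Its integrating factor is $\exp\!\big(2\int r'/r\,dx\big) = r^2$, and multiplying through by $r^2$ turns the left-hand side into an exact derivative,
$$\frac{d}{dx}\big(r^2\,\dot\varphi\big) = \rho^{\frac12}\,r^2 .$$
Integrating from $0$ to $x$ and invoking the initial condition $\varphi(0,z)=0$, which forces $\dot\varphi(0,z)=0$ and annihilates the lower boundary term, yields
$$r^2(x,z)\,\dot\varphi(x,z) = \int_0^x \rho^{\frac12}(t)\,r^2(t,z)\,dt .$$
Dividing by $r^2(x,z)>0$ gives exactly \eqref{2.9}.

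The one point that requires a word of justification is that $\varphi$ is jointly $C^1$ in $(x,z)$, so that the mixed second derivatives may be interchanged when differentiating \eqref{2.6}; this is a standard consequence of smooth dependence on parameters for the Pr\"ufer system \eqref{2.6}--\eqref{2.7}. Apart from that, the argument is a routine integrating-factor computation, and the whole point is that the substitution of \eqref{2.7} is what collapses the coefficient into the clean weight $r^2$ appearing in the statement.
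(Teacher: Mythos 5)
Your proof is correct and follows essentially the same route as the paper: differentiate \eqref{2.6} in $z$, use \eqref{2.7} to recognize the coefficient of $\dot\varphi$ as $-2r'/r$, and integrate with the factor $r^2$ using $\dot\varphi(0,z)=0$. In fact your write-up is cleaner than the paper's, which states the integrating factor with a typo (it writes $e^{\int_0^x r'/r\,dt}$ instead of $e^{2\int_0^x r'/r\,dt}=r^2(x)/r^2(0)$) and omits the intermediate steps you supply.
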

\begin{proof} Differentiate equation \eqref{2.6} with respect to
$z:$

\begin{equation}\label{2.9v}
\dot{\varphi'}=\rho^{\frac{1}{2}}+\frac{1}{4}2\varphi\frac{\rho'}{\rho}\cos
(2\varphi ). \end{equation} Multiplying both sides by
$e^{\int_0^x\frac{r'(t)}{r(t)}dt}$, yields
$$\dot{\varphi}=\int_0^x\rho^{\frac{1}{2}}(t)\frac{r^2(t,z)}{r^2(x,z)}dt.$$
\end{proof}
\begin{corollary}\label{c1}
\begin{equation}\label{2.9t}
\dot{\theta}( x,z) =\frac{1}{z^{2}r^{2}(x)}\int_{0}^{x}r^{2}(t)
\Big[2z\rho^{\frac{1}{2}}(t)+\frac{1}{4}\frac{\rho'(t)}{\rho(t)}\Big(
\sin(2\varphi(t))+2\varphi(t)\cos (2\varphi) \Big)\Big] dt.
\end{equation}
\end{corollary}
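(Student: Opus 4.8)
The plan is to obtain \eqref{2.9t} by differentiating the relation $\theta=\varphi/z$ directly in $z$ and then forcing every resulting term into an integral carrying the weight $r^2(t)/r^2(x)$, so that the right-hand side emerges as a single integral against $r^2(t)$ divided by $z^2r^2(x)$. Since $\theta(x,z)=\varphi(x,z)/z$, the quotient rule gives $\dot\theta=(z\dot\varphi-\varphi)/z^2$, that is, $z^2\dot\theta=z\dot\varphi-\varphi$. The first summand is already in the desired shape by Lemma \ref{lem1}: multiplying \eqref{2.9} by $z$ yields $z\dot\varphi=\frac{1}{r^2(x)}\int_0^x z\rho^{\frac12}(t)\,r^2(t)\,dt$, which supplies the $\rho^{\frac12}$ contribution to the integrand.

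The substantive step is to rewrite the lone $\varphi$ in the same integral form, with $r^2(t)$ inside and $1/r^2(x)$ outside. For this I would compute $\frac{d}{dx}\big(r^2\varphi\big)=2rr'\,\varphi+r^2\varphi'$ and then substitute $r'/r$ from \eqref{2.7} and $\varphi'$ from \eqref{2.6}. Because $\varphi(0,z)=0$ in \eqref{2.2}, integrating this identity from $0$ to $x$ expresses $r^2(x)\varphi(x)$ as $\int_0^x r^2(t)\big[z\rho^{\frac12}+\tfrac14\tfrac{\rho'}{\rho}\sin(2\varphi)-\tfrac12\tfrac{\rho'}{\rho}\,\varphi\cos(2\varphi)\big]\,dt$, where the last term comes precisely from $(r^2)'=-\tfrac12\tfrac{\rho'}{\rho}\cos(2\varphi)\,r^2$ acting on $\varphi$. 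Inserting both pieces into $z^2r^2(x)\dot\theta=z\,r^2(x)\dot\varphi-r^2(x)\varphi$, gathering the $\rho^{\frac12}$ terms and collecting the trigonometric contributions under the common factor $\tfrac14\tfrac{\rho'}{\rho}$, and finally dividing by $z^2r^2(x)$, should deliver exactly the bracketed integrand of \eqref{2.9t}.

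I expect the main obstacle to lie entirely in this bookkeeping. The two contributions to $\dot\theta$ enter with different natural weights: $\dot\varphi$ is produced already multiplied by $r^2$ through the integrating factor used to prove \eqref{2.9}, whereas $\varphi$ must be coerced into that weighted form by the $\frac{d}{dx}(r^2\varphi)$ device. One therefore has to track with care the sign and the factor $2$ generated by $(r^2)'=-\tfrac12\tfrac{\rho'}{\rho}\cos(2\varphi)\,r^2$ when it meets $\varphi$, and match it against the $2\varphi\cos(2\varphi)$ term in \eqref{2.9t}; reconciling how the two $\rho^{\frac12}$-integrals combine is the delicate point. Everything else is a routine substitution of \eqref{2.6}--\eqref{2.7} followed by a single integration against the same factor $r^2$ already employed in Lemma \ref{lem1}.
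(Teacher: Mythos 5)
You follow exactly the paper's route: split $z^2\dot\theta=z\dot\varphi-\varphi$, treat $z\dot\varphi$ with Lemma \ref{lem1}, and convert the lone $\varphi$ into a weighted integral by integrating $\frac{d}{dx}(r^2\varphi)=2rr'\varphi+r^2\varphi'$ from $0$ to $x$, using \eqref{2.6}, \eqref{2.7} and $\varphi(0,z)=0$. Your intermediate identity
$$r^2(x)\varphi(x)=\int_0^x r^2(t)\Bigl[z\rho^{\frac12}(t)+\tfrac14\tfrac{\rho'}{\rho}\sin(2\varphi)-\tfrac12\tfrac{\rho'}{\rho}\,\varphi\cos(2\varphi)\Bigr]dt$$
is correct. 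The gap is your final assertion that inserting the two pieces ``should deliver exactly the bracketed integrand of \eqref{2.9t}.'' It does not: when you subtract this identity from $zr^2(x)\dot\varphi=\int_0^x z\rho^{1/2}(t)r^2(t)\,dt$, the two $\rho^{1/2}$-integrals cancel identically --- they do not combine into a $2z\rho^{1/2}$ term --- and the $\sin(2\varphi)$ contribution survives with a minus sign. What your computation actually proves is
$$\dot\theta(x,z)=\frac{1}{z^2r^2(x)}\int_0^x r^2(t)\,\frac{1}{4}\,\frac{\rho'(t)}{\rho(t)}\Bigl(2\varphi(t)\cos(2\varphi(t))-\sin(2\varphi(t))\Bigr)dt,$$
which differs from \eqref{2.9t} in both respects.

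The mismatch is not a failure of your bookkeeping but of the target: \eqref{2.9t} as printed is false, and the paper's own derivation reaches it only through a sign slip. In the fourth line of its computation, $-r^2(x)\varphi(x)$ is expanded as $-2\int_0^x rr'\varphi\,dt+\int_0^x r^2\varphi'\,dt$, with the wrong sign on the second integral (the next line's ``$2z\rho^{1/2}+\varphi'$'' is a further typo for $z\rho^{1/2}+\varphi'$); all subsequent steps correctly propagate that one error into \eqref{2.9t}. A one-line check refutes the stated formula: if $\rho$ is constant, \eqref{2.6} gives $\varphi=z\rho^{1/2}x$, so $\theta=\rho^{1/2}x$ is independent of $z$ and $\dot\theta\equiv0$, while the right-hand side of \eqref{2.9t} equals $2\rho^{1/2}x/z>0$. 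So the ``delicate point'' you flagged --- how the two $\rho^{1/2}$-integrals reconcile --- is precisely where the claim breaks: carried out honestly, your argument disproves the corollary rather than proves it, and yields the corrected identity above. Note the consequence does not stay local: the proof of Corollary \ref{c2} uses the spurious $2z\rho^{1/2}$ term to conclude $\dot\theta\ge0$, so the correction has to be propagated there as well.
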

\begin{proof}
\begin{eqnarray*}
&&\dot{\theta}( x,z)=\frac{\dot{\varphi}(x,z)}{z}-\frac{
\varphi(x,z)}{z^2}\cr&&=\frac{1}{z}\int_0^x\rho^{\frac{1}{2}}(t)\frac{r^2(t,z)}{r^2(x,z)}dt-\frac{
\varphi(x,z)}{z^2}\cr&&=\frac{1}{z^2r^2(x,z)}\Big[
\int_0^xz\rho^{\frac{1}{2}}(t) r^2(t,z)dt- r^2(x,z)\varphi(x,z)\Big]
\cr&&=\frac{1}{z^2r^2(x,z)}\Big[
\int_0^xr^2(t,z)z\rho^{\frac{1}{2}}(t)dt-
2\int_0^xr(t)r'(t)\varphi(t,z)dt+\int_0^xr^2(t)\varphi'(t,z)dt\Big]\cr&&=
\frac{1}{z^2r^2(x,z)}\Big[
\int_0^xr^2(t,z)(2z\rho^{\frac{1}{2}}(t)+\varphi'(t,z)) dt-
2\int_0^xr(t)r'(t)\varphi(t,z)dt\Big]\cr&&=
\frac{1}{z^2r^2(x,z)}\Big[\int_0^xr^2(t,z)(2z\rho^{\frac{1}{2}}(t)+\varphi'(t,z))
dt-2\int_0^xr^2(t,z)\frac{r'(t,z)}{r(t,z)}\varphi(t,z)dt\Big]
\cr&&=\frac{1}{z^2r^2(x,z)}\Big[\int_0^xr^2(t,z)(2z\rho^{\frac{1}{2}}(t)+\varphi'(t,z))
dt+\frac{2}{4}\int_0^xr^2(t,z)\frac{\rho'(t,z)}{\rho(t,z)}\varphi(t,z)\cos(2\varphi)dt\Big]
\cr&&=\frac{1}{z^{2}r^{2}(x)}\int_{0}^{x}r^{2}(t)
\Big[2z\rho^{\frac{1}{2}}(t)+\frac{1}{4}\frac{\rho'(t)}{\rho(t)}\Big(
\sin(2\varphi(t))+2\varphi(t)\cos (2\varphi) \Big)\Big] dt.
\end{eqnarray*}
\end{proof}

We can now prove Corollary \ref{c2}.

\begin{proof}
Let $L(x)=\rho(x_{2i+1}),$ for all $x\in(x_{2i},x_{2i+2}) ,$ then
$L'\equiv0$ for all $x\in(x_{2i},x_{2i+2}).$ Using Corollary
\ref{c1}, we obtain

\begin{eqnarray*}&& \dot{\theta}( x,z)
=\frac{2}{zr^{2}(x)}\int_{0}^{x}r^{2}(t)L^{\frac{1}{2}}(t) dt
 \geq0.
\end{eqnarray*}
Therefore, $ \dot{\theta}( x ,z) \geq 0 .$ Let $m$ be less than $n$.
Then $\frac{m\pi}{z_m}=\theta(z_m)\leq\frac{n\pi}{z_n}=\theta(z_n)$,
and thus $\frac{z_n}{ z_m} \leq \frac{n}{ m}$ and
$\frac{\lambda_n(L)}{ \lambda_m(L)} \leq (\frac{n}{ m})^2.$ Then
from Proposition \ref{p1}, we get
$$\frac{\lambda_n }{ \lambda_m } \leq (\frac{n}{ m})^2.$$
 The equality iff $\frac{\lambda_n(t)}{
\lambda_m(t)} $ is a constant. From \eqref{2.7}, $ \dot{\theta}( x
,z) = 0 $, which implies that $\rho\equiv\hat{\rho}\equiv cte$. This
completes the proof of the theorem.
\end{proof}\\

\begin{proof} of Theorem
\ref{theo11}\\ We define $\tilde{\rho}(x)=\rho(1-x),$ then
$\tilde{\rho}(x)$ is monotone decreasing in $[0,1-x_0]$ and monotone
increasing in $[1-x_0,1].$ According to Proposition \ref{p1}
together with Corollary \ref{c2}, yields
$$\frac{\lambda_n }{ \lambda_m}=\frac{\lambda_n(\rho) }{ \lambda_m (\rho)}\leq
\frac{\lambda_n(L) }{ \lambda_m (L)} \leq \big(\frac{n}{
m}\big)^2.$$ The equality holds, if $\rho$ is a constant.
\end{proof}

\section{Eigenvalue ratios for Sturm-Liouville problems}
In this section, For this result, we modify the inverse Liouville
substitution (e.g., see  \cite[pp. 51]{M}, \cite{2'}) in order to
transform Equation \eqref{1.1S} into \eqref{1.1}, whose the density
is single-well. Therefore, we can use the result of section $2$ on
the ratio of eigenvalues $\frac{\lambda_n}{\lambda_{m}}$ for the
string equations with single-well densities.\\
\begin{theorem}\label{theo13}
Let $q$ be a single-barrier potential and
 $p\rho$ be a single-well function with transition point $x_0=\frac{1}{2}$ such that
  $0<\min(\hat{\mu}_1,\tilde{\mu}_1)$
 where $\hat{\mu} _1$ and
$\tilde{\mu}_1$ are the first eigenvalues of the Neumann boundary
problems defined on $[0,\frac{1}{2}]$ and $[\frac{1}{2},1],$
respectively. Then the eigenvalues of Problem
\eqref{1.1S}-\eqref{1.2} satisfy
   \begin{eqnarray}\label{12}
 \frac{\lambda_{n}}{\lambda_{m}}\leq (\frac{n}{m})^2.
 \end{eqnarray}
 Equality holds iff $q\equiv0$ and $p\rho$ is constant in $[0,1]$.
 \end{theorem}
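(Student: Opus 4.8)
\textbf{Proof proposal for Theorem \ref{theo13}.}

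The plan is to reduce the Sturm-Liouville problem \eqref{1.1S}-\eqref{1.2} to a string equation of the form \eqref{1.1} with a single-well density, so that Theorem \ref{theo11} applies directly. First I would introduce the Liouville transformation. Setting
$$
t(x)=\int_0^x\sqrt{\frac{\rho(s)}{p(s)}}\,ds,\qquad
\ell=t(1),
$$
and defining the new dependent variable $v=(p\rho)^{1/4}y$, the equation \eqref{1.1S} becomes, after rescaling the independent variable to $[0,1]$, an equation of the form $-v''=\lambda\,\sigma\, v-Q(t)v$ on the transformed interval, where $\sigma$ is a constant (absorbing the total optical length $\ell$) and $Q$ is the effective potential produced by the transformation,
$$
Q=\frac{q}{p\rho}\ell^2+\frac{\ell^2}{(p\rho)^{1/4}}\Big((p\rho)^{1/4}\Big)''_{t}.
$$
The Dirichlet conditions \eqref{1.2} are preserved by this substitution, since $(p\rho)^{1/4}>0$.

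Next I would show that the effective potential can be absorbed into an effective single-well density. The modification of the inverse Liouville substitution referred to in the section preamble is used here: instead of leaving a potential term, one writes the transformed equation as $-v''=\lambda\,\tilde\rho(t)\,v$ for a suitable positive weight $\tilde\rho$. This is where the hypotheses enter. The condition that $q$ is single-barrier and $p\rho$ is single-well, both with transition point $x_0=\tfrac12$, is designed so that the contributions of $q/(p\rho)$ and of the Schwarzian-type term from $((p\rho)^{1/4})''$ combine into a function that is still single-well with its transition point at the midpoint. The spectral positivity condition $0<\min(\hat\mu_1,\tilde\mu_1)$, where $\hat\mu_1,\tilde\mu_1$ are the first Neumann eigenvalues on the two halves, is precisely what guarantees that the effective weight $\tilde\rho$ stays strictly positive on all of $[0,1]$; without it the transformed density could vanish or change sign and the reduction would fail. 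I expect verifying that $\tilde\rho$ is both positive and single-well is the main obstacle, since it requires controlling the sign of the second derivative of $(p\rho)^{1/4}$ on each half-interval and matching the two halves at $t(\tfrac12)$, and this is exactly where the Neumann-eigenvalue hypothesis must be invoked.

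Once the reduction is complete and $\tilde\rho$ is shown to be a single-well density, Theorem \ref{theo11} yields
$$
\frac{\lambda_n}{\lambda_m}\leq\Big(\frac{n}{m}\Big)^2,
$$
since the Liouville transformation preserves the eigenvalues $\lambda_n$ up to the common rescaling factor that cancels in the ratio. Finally, for the equality statement I would trace the equality case of Theorem \ref{theo11}: equality forces $\tilde\rho$ to be constant, which by the explicit form of the transformation unwinds to $Q\equiv0$ and hence $q\equiv0$ together with $(p\rho)^{1/4}$ being affine in $t$; combined with the Dirichlet endpoint behavior this forces $p\rho$ to be constant on $[0,1]$, giving exactly the stated equality condition $q\equiv0$ and $p\rho$ constant.
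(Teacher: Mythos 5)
There is a genuine gap, and it sits exactly where you flagged it: the step ``one writes the transformed equation as $-v''=\lambda\,\tilde\rho(t)\,v$'' is not a rewriting at all. A potential term cannot be absorbed into the coefficient of the spectral parameter; passing from $-v''+Qv=\lambda\sigma v$ to a string equation requires a second, gauge-type substitution built from a \emph{positive solution of the equation at $\lambda=0$}, and producing such a solution with the right shape is the entire content of the paper's proof, which your proposal defers as ``the main obstacle.'' Concretely, the paper takes $h$ solving $(p h')'=qh$ (equation \eqref{0'}) with $h(1/2)=1$, $h'(1/2)=0$ (conditions \eqref{H1}), and proves $h$ is positive and single-well: the hypothesis $0<\min(\hat\mu_1,\tilde\mu_1)$ gives, via the variational principle, $0<\hat\mu_1<\hat\eta_1$, whence $h>0$ by Sturm comparison and $h'(0)\leq0$ by the monotonicity of $F(0,\lambda)=ph'(0)/h(0)$ (Lemma \ref{lem11}); the single-barrier structure of $q$ then controls the sign pattern of $(ph')'=qh$ so that $h'\leq0$ on $[0,\tfrac12]$ and $h'\geq0$ on $[\tfrac12,1]$. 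Only then does the substitution $y=uh$, $z\propto\int_0^x ds/h^2(s)$ (equation \eqref{inv}) remove the potential, and a final Legendre substitution yields the string equation \eqref{1u1} with density proportional to $\hat h^4\hat p\hat\rho$, which is single-well as a product of single-well functions sharing the transition point $\tfrac12$; Theorem \ref{theo11} then applies. Your proposal misattributes the roles of the hypotheses: the Neumann condition is not about keeping an ``effective weight'' positive after absorbing $Q$, it is what makes $h>0$ with $h'(0)\leq0$; and the single-barrier/single-well assumptions are not used to control any combination of $q/(p\rho)$ with a Schwarzian term.

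Moreover, your choice to apply the classical Liouville transformation \emph{first} makes the missing step essentially inaccessible. Your effective potential $Q$ contains the term $\ell^2\bigl((p\rho)^{1/4}\bigr)''_{t}/(p\rho)^{1/4}$, whose sign and monotonicity are uncontrolled: ``$p\rho$ single-well'' constrains no second derivatives (and the paper assumes only continuity of the coefficients, so this term need not even exist). Consequently the positive $\lambda=0$ solution of your transformed equation has no usable structure, and the hypotheses on $q$ and $p\rho$ cannot be brought to bear. The paper's order of operations is the point: remove $q$ first, in the original variable $x$, using $h$ and the substitution \eqref{inv} --- which never differentiates $p\rho$ --- and only afterwards remove $p$ by the Legendre change of variable. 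If you want to salvage your outline, replace the Liouville transformation by this two-step reduction and supply the single-well analysis of $h$ sketched above.
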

 The following result is stated without assumptions on the
monotonicity on the potential $q.$
 \begin{corollary}\label{c3}
 If $q$ is nonnegative and $p\rho$ is single-well with transition point $x_0=\frac{1}{2}$, then
\begin{eqnarray*}\label{NOS}\frac{\lambda_{n}}{\lambda_{m}}\leq (\frac{n}{m})^2.\end{eqnarray*}
 Equality holds iff $q\equiv0$ and $p\rho$ is constant on
 $[0,1]$.

 \end{corollary}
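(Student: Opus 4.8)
The plan is to obtain Corollary \ref{c3} by running the proof of Theorem \ref{theo13}, while checking that the sign condition $q\geq0$ renders superfluous both the single-barrier hypothesis on $q$ and the Neumann-eigenvalue condition $0<\min(\hat{\mu}_1,\tilde{\mu}_1)$. Recall that the reduction of \eqref{1.1S}-\eqref{1.2} to a string equation proceeds by the inverse Liouville substitution: take the zero-energy solution $\phi$ of $-(p\phi')'+q\phi=0$ normalized by $\phi(\frac{1}{2})=1$ and $\phi'(\frac{1}{2})=0$ (Neumann matching at the transition point), set $y=\phi u$, and then change the independent variable by $s=\int_0^x\frac{dt}{p(t)\phi^2(t)}$. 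The first step turns \eqref{1.1S} into $-(p\phi^2u')'=\lambda\rho\phi^2 u$, and the second turns this into the string form $-\ddot u=\lambda\, p\rho\phi^4\, u$; both steps leave $\lambda$ unchanged, and the final affine rescaling of the $s$-interval back to $[0,1]$ multiplies every $\lambda_n$ by one common factor, hence preserves the ratios $\lambda_n/\lambda_m$.

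The crux is to show that the new density $p\rho\phi^4$ is single-well, and this is exactly where I would use $q\geq0$ in place of the theorem's hypotheses. Writing $g=p\phi'$, the zero-energy equation reads $g'=q\phi$. Starting from $g(\frac{1}{2})=0$ and using $q\geq0$, a short bootstrap (as long as $\phi>0$, $g$ is monotone, which forces $\phi$ monotone, hence bounded below by $\phi(\frac{1}{2})=1$, precluding any zero) shows that $\phi\geq1>0$ on $[0,1]$, that $\phi'\leq0$ on $[0,\frac{1}{2}]$, and that $\phi'\geq0$ on $[\frac{1}{2},1]$. Thus $\phi$ is positive and single-well with minimum at $\frac{1}{2}$, so the substitution is legitimate. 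Since $p\rho$ is single-well with the same transition point $\frac{1}{2}$, both $p\rho$ and $\phi^4$ decrease on $[0,\frac{1}{2}]$ and increase on $[\frac{1}{2},1]$; therefore their product $p\rho\phi^4$ is single-well, and this structure survives the monotone reparametrization $x\mapsto s$.

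With a single-well string density in hand, I would invoke Theorem \ref{theo11} to conclude $\frac{\lambda_n}{\lambda_m}\leq(\frac{n}{m})^2$. For the equality case I would argue backwards: equality in Theorem \ref{theo11} forces the transformed density to be constant; as $p\rho$ and $\phi^4$ are single-well with a common minimum, a constant product forces each factor constant, so $\phi'\equiv0$ and hence $q=(p\phi')'/\phi\equiv0$ together with $p\rho$ constant, which is the stated equality condition.

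The main obstacle, and the reason the corollary is a genuine strengthening rather than a formal consequence of Theorem \ref{theo13}, is precisely that the single-barrier and Neumann hypotheses are dropped. The point to verify carefully is that sign-definiteness of $q$ does the work of both: $q\geq0$ yields a positive single-well $\phi$ with no monotonicity assumption on $q$, which is what the single-barrier and Neumann conditions were securing in the theorem, and in particular it is what delivers the single-well character of $p\rho\phi^4$. I would therefore concentrate the care on the bootstrap giving $\phi\geq1$ and on the claim that single-well is preserved under the change of variables, since that is where an error would most likely hide.
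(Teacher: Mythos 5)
Your proposal is correct and follows essentially the same route as the paper: construct the zero-energy solution $h$ with Neumann data $h(\tfrac12)=1$, $h'(\tfrac12)=0$, use $q\geq 0$ (via $(ph')'=qh$) to show $h$ is positive and single-well with trough at $\tfrac12$, reduce \eqref{1.1S}--\eqref{1.2} by the inverse Liouville plus Legendre substitutions to a string equation with single-well density $p\rho h^4$, and invoke Theorem \ref{theo11}. Your only departures are cosmetic improvements: you fuse the two changes of variable into one, replace the paper's appeal to positivity of the Neumann/mixed eigenvalues by a direct continuation argument giving $\phi\geq 1$, and spell out the equality case (constant product of two single-well factors with common transition point forces each factor constant) more cleanly than the paper does.
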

 For the proof of Theorem \eqref{theo13}, we need some preliminary
 results.

 Let $h(x,\lambda)$ be the unique solution of Equation \eqref{1.1S}
satisfying the initial conditions
\begin{eqnarray}\label{H1}
h(1/2)=1,~ h'(1/2)=0.
\end{eqnarray} We introduce the meromorphic
function
\begin{eqnarray}\label{F}
F(x,\lambda)=\frac{ph'(x,\lambda)}{h(x,\lambda)}.
\end{eqnarray}
Let $\hat{\eta}_1$ and $\tilde{\eta}_1$ be the first eigenvalues of
the problems determined by Equation \eqref{1.1S} and the boundary
conditions
        \begin{eqnarray}\label{hat}
 y(0)=y'(1/2)=0,
\end{eqnarray}
        \begin{eqnarray}\label{tilde}
y'(1/2)=y(1)=0,
 \end{eqnarray} respectively.
 \begin{lemma} \label{lem11} ~~\\
 \begin{itemize}
          \item The function $F(0,\lambda)$
          is increasing 
along the interval $(-\infty,\hat{\eta}_1)$.
\item The function $F(1,\lambda)$ is decreasing
          along the interval
          $(-\infty,\tilde{\eta}_1)$.
        \end{itemize}
\end{lemma}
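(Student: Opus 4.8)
The plan is to extract the $\lambda$-dependence of $F$ from the Riccati equation it satisfies together with a Wronskian identity obtained by differentiating in $\lambda$. Writing \eqref{1.1S} for $h$ as $(ph')'=(q-\lambda\rho)h$, a direct computation gives the Riccati equation
\begin{equation*}
F'=q-\lambda\rho-\frac{F^2}{p},
\end{equation*}
so the poles of $F(x,\cdot)$ are exactly the $\lambda$ for which $h(x,\lambda)=0$. The first step is therefore to pin down these poles at the endpoints: since $h$ already satisfies $h'(1/2)=0$, the eigenvalue $\hat\eta_1$ of \eqref{hat} (resp. $\tilde\eta_1$ of \eqref{tilde}) is precisely the smallest $\lambda$ with $h(0,\lambda)=0$ (resp. $h(1,\lambda)=0$). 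For $\lambda$ below these thresholds $h$ does not vanish at the relevant endpoint, so $F(0,\cdot)$ and $F(1,\cdot)$ are finite and smooth in $\lambda$ there.

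Next I would differentiate in $\lambda$. Setting $g=\dfrac{\partial h}{\partial\lambda}$ and differentiating \eqref{1.1S} yields $-(pg')'+qg=\lambda\rho g+\rho h$, while the $\lambda$-independent data \eqref{H1} give $g(1/2)=g'(1/2)=0$. Then
\begin{equation*}
\frac{\partial F}{\partial\lambda}=\frac{p(g'h-h'g)}{h^2},
\end{equation*}
and the Wronskian-type quantity $W(x):=p(g'h-h'g)$ telescopes, using the two equations, to $W'(x)=-\rho h^2$. Integrating from $1/2$ and using $W(1/2)=0$ produces the key formula
\begin{equation*}
\frac{\partial F}{\partial\lambda}(x,\lambda)=-\frac{1}{h^2(x,\lambda)}\int_{1/2}^{x}\rho(t)\,h^2(t,\lambda)\,dt.
\end{equation*}

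Finally I read off the two claims from this formula and the sign $\rho>0$. At $x=0$ the orientation of the integral flips, giving $\frac{\partial F}{\partial\lambda}(0,\lambda)=h^{-2}(0,\lambda)\int_0^{1/2}\rho h^2\,dt>0$, so $F(0,\cdot)$ is increasing; at $x=1$ we get $\frac{\partial F}{\partial\lambda}(1,\lambda)=-h^{-2}(1,\lambda)\int_{1/2}^{1}\rho h^2\,dt<0$, so $F(1,\cdot)$ is decreasing. By the first step these hold exactly on $(-\infty,\hat\eta_1)$ and $(-\infty,\tilde\eta_1)$, where the denominators do not vanish. The only genuine obstacle is the bookkeeping in the first step, namely matching $\hat\eta_1,\tilde\eta_1$ to the first zeros of $h(0,\cdot)$ and $h(1,\cdot)$ and checking that $h$ stays positive at the endpoints for very negative $\lambda$, so that the first sign change really occurs at these eigenvalues; the differentiation and integration are routine once the identity $W'=-\rho h^2$ is in hand.
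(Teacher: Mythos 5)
Your proof is correct: the Wronskian identity $W'=-\rho h^2$ with $W(1/2)=0$ (which follows since $g=\partial h/\partial\lambda$ inherits zero data at $x=1/2$ from the $\lambda$-independent conditions \eqref{H1}) gives $\partial F/\partial\lambda(0,\lambda)=h^{-2}(0,\lambda)\int_0^{1/2}\rho h^2\,dt>0$ and $\partial F/\partial\lambda(1,\lambda)=-h^{-2}(1,\lambda)\int_{1/2}^{1}\rho h^2\,dt<0$, and the bookkeeping step is indeed routine, since any eigenfunction of \eqref{hat} or \eqref{tilde} must be a scalar multiple of $h$, so the zeros of $h(0,\cdot)$ and $h(1,\cdot)$ are exactly the eigenvalues of those problems and the denominators cannot vanish below $\hat{\eta}_1$, respectively $\tilde{\eta}_1$ (note that only non-vanishing, not positivity, of $h$ at the endpoints is needed, so your last worry is even lighter than you suggest). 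The paper offers no internal argument for this lemma, deferring entirely to Lemma $3.3$ of \cite{JJ}, and that reference's proof is precisely this derivative-in-$\lambda$ Wronskian computation, so your route coincides with the intended one while having the added merit of being self-contained.
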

\begin{proof}
The proof is similar to that of Lemma $3.3$ in \cite{JJ}.
\end{proof}

We are now ready to prove Theorem \ref{theo13}.

\begin{proof} Firstly, if
$q\equiv0$ then by use the Legendre substitution \cite[pp.
227-228]{L}
\begin{eqnarray}\label{SB}
t(x)=\frac{1}{\sigma}\int_{0}^{x}\frac{1}{p(z)}dz,~~
\sigma=\int_{0}^{1}\frac{1}{p(z)}dz,\end{eqnarray} Equation
\eqref{1.1S} can be rewritten in the string equation
$$-\ddot{y}=\lambda \sigma^2\tilde{p}(t)\tilde{\rho}(t)y,$$ where
$\tilde{p}(t)=p(x)$ and $\tilde{\rho}(t)=\rho(x)$. Thus the estimate
\eqref{12} is direct consequence of Theorem \ref{theo11}. In the
sequel we suppose that $q\not\equiv0$. Assume that
$\hat{\mu}_1=\min(\hat{\mu}_1,\tilde{\mu}_1)$ and let $h$ be the
unique solution of the second-order equation
\begin{eqnarray}\label{0'} (p(x)y')'=q(x)y ,\end{eqnarray} satisfying the initial
conditions \eqref{H1}. Hence, by the hypothesis and the variational
principle, $$0<\hat{\mu}_1<\hat{\eta}_1.$$
 It is known that $h(x,\hat{\eta}_1)>0$ on
$(0,\frac{1}{2}]$, by Sturm comparison theorem (see \cite[Chap.
1]{5}), we have $h(x)>0$ on $[0,\frac{1}{2}].$\\ On the other hand,
since $F(0,\hat{\mu}_1)=0$, then in view of Lemma \ref{lem11},
$F(0,\lambda)\leq0$ on $(-\infty,\hat{\mu}_1]$. Hence, from the
condition $0<\hat{\mu}_1,$ together with $h(0)>0$, we get
$h'(0)\leq0.$ \\ Taking into account that $q$ is increasing on
$[0,\frac{1}{2}]$, then it may vanish at most once, say at
$a_0\in[0,\frac{1}{2})$. From this and \eqref{0'}, we have
$(ph')'\leq0$ on $[0,a_0]$ and $(ph')'\geq0$ on $[a_0,\frac{1}{2}]$,
and consequently $ph'$ is decreasing on $[0,a_0]$ and increasing on
$[a_0,\frac{1}{2}]$. Since $h'(\frac{1}{2})=0$ and $h'(0)\leq0$,
then $h'(x)\leq0$ on $[0,\frac{1}{2}].$ Using similar arguments, it
can be shown that $h'(x)\geq0$ on $[\frac{1}{2},1].$ Therefore, $h$
is a single-well function on $[0,1]$. We introduce the modified
inverse Liouville substitution (e.g., see \cite[pp. 51]{M},
\cite{2'})
\begin{eqnarray}\label{inv}
z(x)=\frac{1}{c}\int_{0}^{x}\frac{1}{h^2(s)}ds,~where~c=\int_{0}^{1}\frac{1}{h^2(s)}ds,
\end{eqnarray}
which transforms Problem  \eqref{1.1S}-\eqref{1.2} into the system
\begin{eqnarray} \label{3.5}
\left\{
 \begin{array}{ll} -(p(z)u^\cdot)^{\cdot}=\tilde{\lambda}
 \tilde{h}^4(z)\rho(z) u,
 ~z\in(0,1),\\
u(0)=u(1)=0,
 \end{array}
 \right.
\end{eqnarray} where $y=uh,$ $\tilde{h}(z)=h(x)$ and
$\tilde{\lambda}=c^2\lambda$. Using the Legendre substitution
\eqref{SB}, the System \eqref{3.5}, becomes a string equation
\begin{eqnarray}\label{1u1}
-\ddot{u}=\sigma^2\tilde{\lambda}
\hat{h}^4(t)\hat{p}(t)\hat{\rho}(t) u,~t\in(0,1),
\end{eqnarray}
 with Dirichlet boundary conditions
\begin{equation}
u(0)=u(1)=0, \label{1.2'1}
\end{equation} where $\hat{p}(t)=p(x),~\hat{\rho}(t)=\rho(x)$ and $\hat{h}(t)=h(x)$
Taking into account that $\hat{h}$ is a single-well on $[0,1]$, then
in view of Theorem \ref{theo11}, we have $$
\frac{\tilde{\lambda}_{n}
}{\tilde{\lambda}_{m}}\leq(\frac{n}{m})^2.$$ Since
$\tilde{\lambda}_n=c^2\sigma^2(\lambda_n-\mu_1)$, then $
\frac{\lambda_{n} }{ \lambda_{m}}\leq(\frac{n}{m})^2.$ \\
 Assume that
there exist $q(x),$ $p(x)$ and $\rho(x)$ such that $
\frac{\lambda_{n}}{ \lambda_{m}}=(\frac{n}{m})^2$, where
$q(x)\not\equiv0$ or $p\rho(x)$ is not constant on $[0,1].$ It is
clear that the density in equation \eqref{1u1} is constant iff
$\hat{h}^4=\frac{\alpha}{ p\rho}$ on $[0,1]$ for some $\alpha>0$,
which is not possible from the monotonicity of $\hat{h}^4p\rho$.
This is in contradiction with Theorem \ref{theo11}. The proof of the
theorem is complete.\end{proof}
\\
we can now prove Corollary \ref{c3}.\\
\begin{proof}
Following the proof of Theorem \ref{theo13}, let $h$ be the solution
of Problem \eqref{0'}-\eqref{H1}. As before
$\min(\hat{\eta}_1,\tilde{\eta}_1)>0,$ we have $h(x)>0$ on $[0,1]$.
Since $q(x)\geq0$ on $[0,1]$, then by \eqref{0'} and \eqref{H1},
$h'(x)\leq0$ on $[0,\frac{1}{2}]$ and $h'(x)\geq0$ on
$[\frac{1}{2},1]$. Therefore $h$ is a single-well function on
$[0,1]$. The rest of the proof is similar to that of Theorem
\ref{theo13}.
\end{proof}
\begin{remark}\label{rem1}

 The method used in the proof of Theorem 1 cannot be applied in the
  case of single-barrier densities. More precisely, we have
  $$\frac{\lambda_n(\rho)}{\lambda_{m}(\rho)} \geq
\frac{\lambda_n(\hat{\rho})}{\lambda_{m}(\hat{\rho})}$$ on the other
hand,
$$\frac{\lambda_n(L)}{\lambda_{m}(L)}\leq(\frac{n}{m})^2.$$
I believe that different techniques are needed to deal with the case
of single barrier densities.
\end{remark}
~~~~~~~~~~~\\ {\bf{Acknowledgement}}.{ Research supported by Partial
differential equations laboratory $(LR03ES04),$ at the Faculty of
Sciences of Tunis, University of Tunis El Manar, $2092,$ Tunis,
Tunisia}

\vskip 3cm
\end{document}